\nonstopmode \numberwithin{equation}{section}
\newtheorem{theorem}{Theorem}[section]
 \newtheorem{corollary}{Corollary}[section]
\begin{document}
\title[ Some integrals involving generalized $\mathtt{k}$-Struve functions]
{Some integrals involving generalized $\mathtt{k}$-Struve functions.}

\author{Kottakkaran Sooppy  Nisar}
\address{Department of Mathematics, College of Arts and Science-Wadi Al dawser, 11991,\\
Prince Sattam bin Abdulaziz University, Saudi Arabia}
\email{ksnisar1@gmail.com, n.sooppy@psau.edu.sa}

\author{Saiful Rahman Mondal}
\address{Department of Mathematics\\
King Faisal University, Al Ahsa 31982, Saudi Arabia}
\email{smondal@kfu.edu.sa}

\subjclass[2010]{33B20, 33C20, 33B15, 33C05}
\keywords{gamma function, $k$-gamma function, Lavoie-Trottier integral formula, Wright function, $\mathtt{k}$-Struve function}

\begin{abstract}
The close form of some integrals involving recently developed generalized $\mathtt{k}$-Struve functions is obtained. The outcome of these  integrations  is  expressed in terms of generalized Wright functions. Several special cases are deduced which lead to some known results. 
\end{abstract}

\maketitle
\section{Introduction and preliminaries}\label{Intro}
The Struve function of order $p$ given by%
\begin{equation}
\mathtt{H}_{p}\left( x\right) :=\sum_{k=0}^{\infty }\frac{\left( -1\right) ^{k}}{%
\Gamma \left( k+3/2\right) \Gamma \left( k+p+\frac{3}{2}\right) }\left( 
\tfrac{x}{2}\right) ^{2k+p+1},  \label{Struve-1}
\end{equation}
is a particular solution of the non-homogeneous Bessel differential equation 
\begin{equation}
x^{2}y^{^{\prime \prime }}\left( x\right) +xy^{^{\prime }}\left( x\right)
+\left( x^{2}-p^{2}\right) y\left( x\right) =\frac{4\left( \frac{x}{2}%
\right) ^{p+1}}{\sqrt{\pi }\Gamma \left( p+1/2\right) }.  \label{bde-1}
\end{equation}
Recently, Nisar et. al,\cite{Nisar-Saiful} introduced and studied various properties 
of $\mathtt{k}$-Struve function $\mathtt{S}_{\nu,c}^{\mathtt{k}}$ defined by
\begin{equation}\label{k-Struve}
\mathtt{S}_{\nu,c}^{\mathtt{k}}(x):=\sum_{r=0}^{\infty}\frac{(-c)^r}
{\Gamma_{\mathtt{k}}(r\mathtt{k}+\nu+\frac{3\mathtt{k}}{2})\Gamma(r+\frac{3}{2})}
\left(\frac{x}{2}\right)^{2r+\frac{\nu}{\mathtt{k}}+1}.
\end{equation}

The generalized Wright hypergeometric function ${}_{p}\psi _{q}(z)$ is given by
the series 
\begin{equation}
{}_{p}\psi _{q}(z)={}_{p}\psi _{q}\left[ 
\begin{array}{c}
(a_{i},\alpha _{i})_{1,p} \\ 
(b_{j},\beta _{j})_{1,q}%
\end{array}%
\bigg|z\right] =\displaystyle\sum_{k=0}^{\infty }\dfrac{\prod_{i=1}^{p}%
\Gamma (a_{i}+\alpha _{i}k)}{\prod_{j=1}^{q}\Gamma (b_{j}+\beta _{j}k)}%
\dfrac{z^{k}}{k!},  \label{Fox-Wright}
\end{equation}%
where $a_{i},b_{j}\in \mathbb{C}$, and real $\alpha _{i},\beta _{j}\in 
\mathbb{R}$ ($i=1,2,\ldots ,p;j=1,2,\ldots ,q$). Asymptotic behavior of this
function for large values of argument of $z\in {\mathbb{C}}$ were studied in 
\cite{CFox} and under the condition 
\begin{equation}
\displaystyle\sum_{j=1}^{q}\beta _{j}-\displaystyle\sum_{i=1}^{p}\alpha
_{i}>-1  \label{eqn-5-Struve}
\end{equation}%
was found in the work of \cite{Wright-2,Wright-3}. Properties of this
generalized Wright function were investigated in \cite{Kilbas}, (see also 
\cite{Kilbas-itsf, Kilbas-frac}. In particular, it was proved \cite{Kilbas}
that ${}_{p}\psi _{q}(z)$, $z\in {\mathbb{C}}$ is an entire function under
the condition ($\ref{eqn-5-Struve}$).

The $k$-gamma function defined in \cite{Diaz2007} as:
 \begin{eqnarray}
 \Gamma_k(z)=\int\limits_{0}^{\infty}t^{z-1}e^{-\frac{t^k}{k}}dt, z\in\mathbb{C}.
 \end{eqnarray}
 By inspection the following relation holds:
 \begin{eqnarray}
 \Gamma_k(z+k)=z\Gamma_k(z)
 \end{eqnarray}
 and
 \begin{eqnarray}\label{2}
 \Gamma_k(z)=k^{\frac{z}{k}-1}\Gamma(\frac{z}{k}).
 \end{eqnarray}
If $k\rightarrow 1$ and $c=1$, then the generalized $k$-Bessel function defined in (\ref{9}) reduces to the well known classical Bessel function $J_v$ defined in \cite{Erdelyi1953}.
 
In this paper, we define a class of integral formulas which involves the generalized $\mathtt{k}$-Struve function as defined in \eqref{k-Struve}. Also, we investigate some special cases as the corollaries. For this purpose, we recall the following result of Lavoie and Trottier \cite{Lavoie1969}.
\begin{align}\label{Lavoi-formula}
&\int\limits_{0}^{1}x^{\alpha-1}\left(1-x\right)^{2\beta-1}\left(1-\frac{x}{3}\right)^{2\alpha-1}\left(1-\frac{x}{4}\right)^{\beta-1}dx\notag\\
&=\left(\frac{2}{3}\right)^{2\alpha}\frac{\Gamma(\alpha)\Gamma(\beta)}{\Gamma(\alpha+\beta)},\mathfrak{R}(\alpha)>0,\mathfrak{R}(\beta)>0.
\end{align}
For more details about integral representations of various special function, the reader can refer these papers (see \cite{Choi2013}, \cite{Choi2014},  \cite{Menaria2016},  \cite{Nisar}, \cite{Nisara}).
 \section{Main Result}
 In this section, we establish two generalized integral formulas containing \eqref{k-Struve}, which represented in terms of generalized Wright function defined in \eqref{Fox-Wright} by inserting with the suitable argument defined in \eqref{Lavoi-formula}.
 \begin{theorem}\label{2.1}
 For  $\mathtt{k}\in \mathbb{R}, \alpha, \mu,\beta,\nu, c\in\mathbb{C}$ with $\nu>\frac{3}{2}\mathtt{k}$, $\Re(\alpha+\mu)>0$, $\Re(\alpha+\frac{\nu}{\mathtt{k}}+1)>0$ and $x>0$, then the following integral formula hold true:
 \begin{align*}
 \int\limits_{0}^{1}x^{\alpha+\mu-1}(1-x)^{2\alpha-1}(1-\frac{x}{3})^{2(\alpha+\mu)-1}(1-\frac{x}{4})^{\alpha-1}
 \mathtt{S}_{\nu,c}^{\mathtt{k}}\left(\frac{y\left(1-\frac{x}{4}\right)\left( 1-x\right)^2}{2}\right)dx
 \end{align*}
 \begin{align}\label{9}
 &=\frac{(\frac{y}{2})^{\frac{\nu}{\mathtt{k}}+1}\Gamma(\alpha+\mu)(\frac{2}{3})^{2(\alpha+\mu)}}{k^\frac{\nu}{k}}\notag\\
 &\times \quad_2\Psi_{3}
\left[
\begin{array}{ccc}
 (\alpha+\frac{\nu}{\mathtt{k}}+1, 2), (1, 1); \\
 &   & \quad | -\frac{cy^2}{4\mathtt{k}}\\
(\frac{\nu}{\mathtt{k}}+\frac{3}{2},1), (\frac{3}{2}, 1), (2\alpha+\frac{\nu}{\mathtt{k}}+\mu, 2) \\
\end{array}
\right].
 \end{align}
 \end{theorem}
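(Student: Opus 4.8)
The plan is to reduce the integral to a term-by-term application of the Lavoie--Trottier formula \eqref{Lavoi-formula}. First I would replace the $\mathtt{k}$-Struve function in the integrand by its defining series \eqref{k-Struve}, evaluated at the argument $X=\tfrac{y(1-x/4)(1-x)^2}{2}$, so that the factor $(X/2)^{2r+\nu/\mathtt{k}+1}$ splits as $(y/4)^{2r+\nu/\mathtt{k}+1}(1-x/4)^{2r+\nu/\mathtt{k}+1}(1-x)^{2(2r+\nu/\mathtt{k}+1)}$, separating the $x$-dependence from the summation index $r$. Since $\nu>\tfrac{3}{2}\mathtt{k}$ makes the series \eqref{k-Struve} well defined and the condition $\Re(\alpha+\tfrac{\nu}{\mathtt{k}}+1)>0$ controls the growth of the exponents at $r=0$ (and hence for all $r\ge 0$), on the compact interval $[0,1]$ the series converges absolutely and uniformly; this justifies interchanging summation and integration, leaving a single parametrized integral $I_r$ to evaluate for each $r$.

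The key step is to recognize $I_r$ as an instance of \eqref{Lavoi-formula}. After absorbing the factors from the Struve series, the integrand of $I_r$ carries the exponents $x^{(\alpha+\mu)-1}$, $(1-x)^{2(\alpha+2r+\nu/\mathtt{k}+1)-1}$, $(1-x/3)^{2(\alpha+\mu)-1}$ and $(1-x/4)^{(\alpha+2r+\nu/\mathtt{k}+1)-1}$. These match the Lavoie--Trottier integrand exactly with $A=\alpha+\mu$ and $B=\alpha+2r+\tfrac{\nu}{\mathtt{k}}+1$; the hypotheses $\Re(\alpha+\mu)>0$ and $\Re(\alpha+\tfrac{\nu}{\mathtt{k}}+1)>0$ are precisely what guarantees $\Re(A)>0$ and $\Re(B)>0$ for every $r\ge 0$. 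Applying \eqref{Lavoi-formula} then gives $I_r=(2/3)^{2(\alpha+\mu)}\,\Gamma(\alpha+\mu)\,\Gamma(B)/\Gamma(A+B)$, a ratio of ordinary gamma functions depending on $r$ through $B=\alpha+2r+\tfrac{\nu}{\mathtt{k}}+1$.

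Next I would convert the $\mathtt{k}$-gamma factor $\Gamma_{\mathtt{k}}(r\mathtt{k}+\nu+\tfrac{3\mathtt{k}}{2})$ appearing in \eqref{k-Struve} into a classical gamma via the relation \eqref{2}, obtaining a factor $\mathtt{k}^{\,r+\nu/\mathtt{k}+1/2}\,\Gamma(r+\tfrac{\nu}{\mathtt{k}}+\tfrac{3}{2})$; this produces the $\mathtt{k}$-power prefactor of \eqref{9} and turns $(-c)^r$ into $(-c/\mathtt{k})^r$, which combines with the $y^{2r}$ coming from the Struve series to build the Wright-function argument. Pulling all $r$-independent quantities in front and inserting the identity $\Gamma(1+r)/r!=1$ to supply the numerator pair $(1,1)$, I would then compare the residual $r$-series against the definition \eqref{Fox-Wright}: the numerator gammas $\Gamma(\alpha+\tfrac{\nu}{\mathtt{k}}+1+2r)$ and $\Gamma(1+r)$ give the top-row pairs $(\alpha+\tfrac{\nu}{\mathtt{k}}+1,2)$ and $(1,1)$, while $\Gamma(r+\tfrac{\nu}{\mathtt{k}}+\tfrac{3}{2})$, $\Gamma(r+\tfrac{3}{2})$ and $\Gamma(A+B)$ supply the three bottom-row pairs, with the step $2$ coming from the doubling of $r$ in $B$. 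This exhibits the sum as a ${}_2\Psi_3$ and yields \eqref{9}.

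I expect the principal obstacle to be the careful algebraic bookkeeping in this final reassembly rather than any analytic difficulty: one must track every power of $2$, $3$, $4$ and $\mathtt{k}$ through the substitution \eqref{2}, and verify that the shifted arguments of the gamma functions align correctly with the ``step-$2$'' parameters in the numerator and denominator so that the collapsed series is exactly a generalized Wright function of the stated parameters. The convergence argument, the interchange of summation and integration, and the single-integral evaluation through \eqref{Lavoi-formula} are all routine; the delicate part is purely the simplification of the gamma ratios into the precise ${}_2\Psi_3$ data claimed in \eqref{9}.
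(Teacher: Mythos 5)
Your proposal is correct and follows essentially the same route as the paper's own proof: expand $\mathtt{S}_{\nu,c}^{\mathtt{k}}$ via \eqref{k-Struve}, interchange sum and integral, apply the Lavoie--Trottier formula \eqref{Lavoi-formula} term-by-term with $A=\alpha+\mu$ and $B=\alpha+\frac{\nu}{\mathtt{k}}+1+2r$, convert $\Gamma_{\mathtt{k}}$ to $\Gamma$ via \eqref{2}, and assemble the resulting series into a ${}_{2}\Psi_{3}$ by \eqref{Fox-Wright}. The only point worth flagging is that your (correct) splitting produces $\left(\frac{y}{4}\right)^{2r+\frac{\nu}{\mathtt{k}}+1}$, which carried through faithfully yields the prefactor $\left(\frac{y}{4}\right)^{\frac{\nu}{\mathtt{k}}+1}$ and Wright argument $-\frac{cy^{2}}{16\mathtt{k}}$, whereas the paper's proof writes $\left(\frac{y}{2}\right)^{2r+\frac{\nu}{\mathtt{k}}+1}$ at the same step (effectively treating the Struve argument as $y\left(1-\frac{x}{4}\right)(1-x)^{2}$ without the division by $2$) and so lands on the stated \eqref{9}; this factor-of-two mismatch, like the stray $\mathtt{k}^{1/2}$ between the proof's $\mathtt{k}^{\frac{\nu}{\mathtt{k}}+\frac{1}{2}}$ and the theorem's $\mathtt{k}^{\frac{\nu}{\mathtt{k}}}$, is an inconsistency internal to the paper rather than a gap in your argument.
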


 \begin{proof}
 Let $I$ be the left hand side of \eqref{2.1} and applying \eqref{k-Struve} to the integrand of \eqref{9}, we have
 \begin{align*}
&\int\limits_{0}^{1}x^{\alpha+\mu-1}(1-x)^{2\alpha-1}(1-\frac{x}{3})^{2(\alpha+\mu)-1}(1-\frac{x}{4})^{\alpha-1}
\mathtt{S}_{\nu,c}^{\mathtt{k}}\left(\frac{y\left(1-\frac{x}{4}\right)\left( 1-x\right)^2}{2}\right)dx\\
&= \int\limits_{0}^{1}x^{\alpha+\mu-1}(1-x)^{2\alpha-1}(1-\frac{x}{3})^{2(\alpha+\mu)-1}(1-\frac{x}{4})^{\alpha-1}\\
&\times\sum\limits_{r=0}^{\infty}\frac{(-c)^r(\frac{y}{2})^{2r+\frac{\nu}{\mathtt{k}+1}}\left(1-\frac{x}{4}\right)^{2r+\frac{\nu}{\mathtt{k}}}\left(1-x\right)^{2(2r+\frac{\nu}{\mathtt{k}+1})}}{\Gamma_k(r\mathtt{k}+\nu+\frac{3}{2}\mathtt{k})\Gamma{(r+\frac{3}{2})}}dx
 \end{align*}
 Interchanging the order of integration and summation, which is verified by the uniform convergence of the series under the given assumption of theorem \ref{2.1}, we have
 \begin{align*}
I&=\sum\limits_{r=0}^{\infty}\frac{(-c)^r(\frac{y}{2})^{2r+\frac{\nu}{\mathtt{k}}+1}}{\Gamma_k(r\mathtt{k}+\nu+\frac{3}{2}\mathtt{k})\Gamma{(r+\frac{3}{2})}}\\
&\times\int\limits_{0}^{1}x^{\alpha+\mu-1}(1-x)^{2(2r+\alpha+\frac{\nu}{\mathtt{k}}+1)-1)}(1-\frac{x}{3})^{2(\alpha+\mu)-1}(1-\frac{x}{4})^{(\alpha+\frac{\nu}{\mathtt{k}}+1+2r)-1}dx,
\end{align*}
From the assumption given in theorem \ref{2.1}, since $\Re(\frac{\nu}{\mathtt{k}})>0$, $\Re(\alpha+\frac{\nu}{\mathtt{k}}+1+2r)>\Re(\alpha+\frac{\nu}{\mathtt{k}}+1)>0$, $\Re(\alpha+\mu)>0$, $\mathtt{k}>0$ and using \eqref{Lavoi-formula}, we get
 \begin{align*}
I&=\sum\limits_{r=0}^{\infty}\frac{(-c)^r(\frac{y}{2})^{2r+\frac{\nu}{\mathtt{k}}+1}}{\Gamma_k(r\mathtt{k}+\nu+\frac{3}{2}\mathtt{k})\Gamma{(r+\frac{3}{2})}}\left(\frac{2}{3}\right)^{2(\alpha+\mu)}\frac{\Gamma(\alpha+\mu)\Gamma(2r+\alpha+\frac{\nu}{\mathtt{k}}+1)}{\Gamma(2r+2\alpha+\mu+\frac{\nu}{\mathtt{k}}+1)}
\end{align*}
 Using (\ref{2}), we get
 \begin{align*}
I&=\left(\frac{2}{3}\right)^{2(\alpha+\mu)}\frac{\Gamma(\alpha+\mu)\left(\frac{y}{2}\right)^{\frac{\nu}{\mathtt{k}}}}{\mathtt{k}^{\frac{\nu}{\mathtt{k}}+\frac{1}{2}}}\frac{\Gamma(2r+\alpha+\frac{\nu}{\mathtt{k}}+1)}{\Gamma(2r+2\alpha+\mu+\frac{\nu}{\mathtt{k}}+1)\Gamma_k(r\mathtt{k}+\nu+\frac{3}{2}\mathtt{k})\Gamma{(r+\frac{3}{2})}}
\end{align*}
 In view of definition of Fox-Wright function \eqref{Fox-Wright}, we get the required result.
 \end{proof}

 \begin{theorem}\label{2.2}
 For  $\mathtt{k}\in \mathbb{R}, \alpha, \mu,\beta,\nu, c\in\mathbb{C}$ with $\nu>\frac{3}{2}\mathtt{k}$, $\Re(\alpha+\mu)>0$, $\Re(\alpha+\frac{\nu}{\mathtt{k}}+1)>0$ and $x>0$, then the following integral formula hold true:
 \begin{align*}
 \int\limits_{0}^{1}x^{\alpha-1}(1-x)^{2(\alpha+\mu)-1}(1-\frac{x}{3})^{2\alpha-1}(1-\frac{x}{4})^{(\alpha+\mu)-1}
 \mathtt{S}_{\nu,c}^{\mathtt{k}}\left(\frac{yx\left(1-\frac{x}{3}\right)^{2}}{2}\right)dx
 \end{align*}
 \begin{align}\label{10}
 &=\frac{(\frac{y}{2})^{\frac{\nu}{\mathtt{k}}+1}\Gamma(\alpha+\mu)(\frac{2}{3})^{2\alpha}}{k^\frac{\nu}{\mathtt{k}}3^{\frac{2\nu}{\mathtt{k}}+2}}\notag\\
 &\times \quad_2\Psi_{3}
\left[
\begin{array}{ccc}
 (\alpha+\frac{\nu}{\mathtt{k}}+1, 2), (1, 1); \\
 &   & \quad | -\frac{cy^2}{4\mathtt{k}}\\
(\frac{\nu}{\mathtt{k}}+\frac{3}{2},1), (\frac{3}{2}, 1), (2\alpha+\frac{\nu}{\mathtt{k}}+\mu, 2) \\
\end{array}
\right].
 \end{align}
 \end{theorem}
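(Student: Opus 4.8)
The plan is to follow the strategy of Theorem~\ref{2.1} essentially verbatim, the single structural change being that the argument of the $\mathtt{k}$-Struve function now carries the factors $x$ and $(1-\tfrac{x}{3})^{2}$ in place of $(1-x)^{2}$ and $(1-\tfrac{x}{4})$. Denoting the left-hand side of \eqref{10} by $I$, I would first insert the defining series \eqref{k-Struve}. Expanding the $r$-th term of $\mathtt{S}^{\mathtt{k}}_{\nu,c}\big(\tfrac{yx(1-x/3)^{2}}{2}\big)$ produces the scalar $\big(\tfrac{y}{2}\big)^{2r+\frac{\nu}{\mathtt{k}}+1}$ multiplied by $x^{\,2r+\frac{\nu}{\mathtt{k}}+1}$ and $\big(1-\tfrac{x}{3}\big)^{2(2r+\frac{\nu}{\mathtt{k}}+1)}$, which I would merge with the polynomial weights already present in the integrand.

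The second step is to interchange summation and integration, justified exactly as in Theorem~\ref{2.1} by uniform convergence of the series on $[0,1]$ under the hypotheses $\nu>\tfrac{3}{2}\mathtt{k}$, $\Re(\alpha+\mu)>0$ and $\Re(\alpha+\tfrac{\nu}{\mathtt{k}}+1)>0$. The inner integral then reads
\[
\int_{0}^{1} x^{(\alpha+2r+\frac{\nu}{\mathtt{k}}+1)-1}(1-x)^{2(\alpha+\mu)-1}\big(1-\tfrac{x}{3}\big)^{2(\alpha+2r+\frac{\nu}{\mathtt{k}}+1)-1}\big(1-\tfrac{x}{4}\big)^{(\alpha+\mu)-1}\,dx ,
\]
which is precisely of Lavoie--Trottier type \eqref{Lavoi-formula}, now with $\alpha_{\mathrm{LT}}=\alpha+2r+\tfrac{\nu}{\mathtt{k}}+1$ and $\beta_{\mathrm{LT}}=\alpha+\mu$. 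Applying \eqref{Lavoi-formula} evaluates it as
\[
\Big(\tfrac{2}{3}\Big)^{2(\alpha+2r+\frac{\nu}{\mathtt{k}}+1)}\,
\frac{\Gamma(\alpha+\mu)\,\Gamma\big(\alpha+2r+\tfrac{\nu}{\mathtt{k}}+1\big)}{\Gamma\big(2\alpha+\mu+2r+\tfrac{\nu}{\mathtt{k}}+1\big)} .
\]
Finally I would invoke \eqref{2} in the form $\Gamma_{\mathtt{k}}\big(r\mathtt{k}+\nu+\tfrac{3\mathtt{k}}{2}\big)=\mathtt{k}^{\,r+\frac{\nu}{\mathtt{k}}+\frac12}\,\Gamma\big(r+\tfrac{\nu}{\mathtt{k}}+\tfrac{3}{2}\big)$ to turn every $\Gamma_{\mathtt{k}}$ into an ordinary gamma, pull the $r$-free constants out as the prefactor, and identify the surviving series with a ${}_{2}\Psi_{3}$ via \eqref{Fox-Wright}, the pair $(1,1)$ in the top row furnishing the $r!$ of that definition.

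The main obstacle, and the genuine point of departure from Theorem~\ref{2.1}, is the bookkeeping of the $r$-dependent constants. Because the Struve argument here occupies the ``$\alpha_{\mathrm{LT}}$'' slot of \eqref{Lavoi-formula} rather than the ``$\beta_{\mathrm{LT}}$'' slot, the Lavoie--Trottier prefactor $\big(\tfrac{2}{3}\big)^{2(\alpha+2r+\frac{\nu}{\mathtt{k}}+1)}$ itself depends on $r$. Its $r$-free part (up to a power of $2$) supplies the factor $3^{-(2\nu/\mathtt{k}+2)}$ that is present in \eqref{10} but absent from \eqref{9}, while its geometric part $\big(\tfrac{2}{3}\big)^{4r}$ must be folded together with $(-c)^{r}$, $(\tfrac{y}{2})^{2r}$ and $\mathtt{k}^{-r}$ to form the argument fed to the Wright function. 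By contrast, in Theorem~\ref{2.1} the corresponding prefactor equalled $\big(\tfrac{2}{3}\big)^{2(\alpha+\mu)}$ and was free of $r$, so no geometric factor was thrown back into the argument. Tracking this geometric factor correctly is exactly where care is required to arrive at the closed form \eqref{10}.
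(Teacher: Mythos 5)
Your proposal follows the paper's own proof step for step: insert the series \eqref{k-Struve}, interchange sum and integral, evaluate the inner integral by the Lavoie--Trottier formula \eqref{Lavoi-formula} with $\alpha_{\mathrm{LT}}=\alpha+2r+\tfrac{\nu}{\mathtt{k}}+1$ and $\beta_{\mathrm{LT}}=\alpha+\mu$, convert $\Gamma_{\mathtt{k}}$ to $\Gamma$ via \eqref{2}, and read off a ${}_{2}\Psi_{3}$ from \eqref{Fox-Wright}. On the one genuinely delicate point you are in fact more careful than the paper: you correctly observe that the Lavoie--Trottier prefactor $(\tfrac{2}{3})^{2(\alpha+2r+\nu/\mathtt{k}+1)}$ is $r$-dependent and that its geometric part $(\tfrac{2}{3})^{4r}=(\tfrac{16}{81})^{r}$ must be absorbed into the argument of the Wright function. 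Carried through honestly, this makes that argument $-\tfrac{16}{81}\cdot\tfrac{cy^{2}}{4\mathtt{k}}$ rather than the $-\tfrac{cy^{2}}{4\mathtt{k}}$ printed in \eqref{10}; the paper's own proof silently discards the factor $(\tfrac{2}{3})^{4r}$ between its last two displays, which is why \eqref{10} ends up with the same argument as \eqref{9}. So your method is the right one, but your closing claim that this bookkeeping ``arrives at the closed form \eqref{10}'' should be amended: it arrives at a corrected version of \eqref{10}, with the rescaled argument just described and with the $r$-free part $(\tfrac{2}{3})^{2\nu/\mathtt{k}+2}$ of the prefactor keeping its numerator $2^{2\nu/\mathtt{k}+2}$ (which the printed prefactor, showing only $3^{-(2\nu/\mathtt{k}+2)}$, also drops -- the ``up to a power of $2$'' you noted in passing).
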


 \begin{proof}
Let $\ell$ be the left hand side of \eqref{2.2} and applying \eqref{k-Struve} to the integrand of \eqref{9}, we have
 \begin{align*}
 &\int\limits_{0}^{1}x^{\alpha-1}(1-x)^{2(\alpha+\mu)-1}(1-\frac{x}{3})^{2\alpha-1}(1-\frac{x}{4})^{(\alpha+\mu)-1}
 \mathtt{S}_{\nu,c}^{\mathtt{k}}\left(\frac{yx\left(1-\frac{x}{3}\right)^{2}}{2}\right)dx\\
&= \int\limits_{0}^{1}x^{\alpha-1}(1-x)^{2(\alpha+\mu)-1}(1-\frac{x}{3})^{2\alpha-1}(1-\frac{x}{4})^{(\alpha+\mu)-1}\\
&\times\sum\limits_{r=0}^{\infty}\frac{(-c)^r(\frac{xy}{2})^{2r+\frac{\nu}{\mathtt{k}+1}}\left(1-\frac{x}{3}\right)^{2(2r+\frac{\nu}{\mathtt{k}}+1}}{\Gamma_k(r\mathtt{k}+\nu+\frac{3}{2}\mathtt{k})\Gamma{(r+\frac{3}{2})}}dx
 \end{align*}
 Interchanging the order of integration and summation, which is verified by the uniform convergence of the series under the given assumption of theorem \ref{2.2}, we have
 \begin{align*}
\ell&=\sum\limits_{r=0}^{\infty}\frac{(-c)^r(\frac{y}{2})^{2r+\frac{\nu}{\mathtt{k}}+1}}{\Gamma_k(r\mathtt{k}+\nu+\frac{3}{2}\mathtt{k})\Gamma{(r+\frac{3}{2})}}\\
&\times\int\limits_{0}^{1}x^{(\alpha+\frac{\nu}{\mathtt{k}}+1+2r)-1}(1-x)^{2(\alpha+\mu)-1}(1-\frac{x}{3})^{2(\alpha+\mathtt{k}+1+2r)-1}(1-\frac{x}{4})^{(\alpha+\mu)-1}dx,
\end{align*}
Since $\Re(\frac{\nu}{\mathtt{k}})>0$, $\Re(\alpha+\frac{\nu}{\mathtt{k}}+1+2r)>\Re(\alpha+\frac{\nu}{\mathtt{k}}+1)>0$, $\Re(\alpha+\mu)>0$, $\mathtt{k}>0$ and using \eqref{Lavoi-formula}, we get
 \begin{align*}
\ell&=\sum\limits_{r=0}^{\infty}\frac{(-c)^r(\frac{y}{2})^{2r+\frac{\nu}{\mathtt{k}}+1}}{\Gamma_k(r\mathtt{k}+\nu+\frac{3}{2}\mathtt{k})\Gamma{(r+\frac{3}{2})}}\left(\frac{2}{3}\right)^{2(\alpha+\frac{\nu}{\mathtt{k}}+1+2r)}\frac{\Gamma(\alpha+\mu)\Gamma(2r+\alpha+\frac{\nu}{\mathtt{k}}+1)}{\Gamma(2r+2\alpha+\mu+\frac{\nu}{\mathtt{k}}+1)}
\end{align*}
 Using (\ref{2}), we obtain
 \begin{align*}
\ell&=\left(\frac{2}{3}\right)^{2(\alpha+\frac{\nu}{\mathtt{k}}+1)}\frac{\Gamma(\alpha+\mu)\left(\frac{y}{2}\right)^{\frac{\nu}{\mathtt{k}}+1}}{\mathtt{k}^{\frac{\nu}{\mathtt{k}}+\frac{1}{2}}}\frac{\Gamma(2r+\alpha+\frac{\nu}{\mathtt{k}}+1)}{\Gamma(2r+2\alpha+\mu+\frac{\nu}{\mathtt{k}}+1)\Gamma_k(r\mathtt{k}+\nu+\frac{3}{2}\mathtt{k})\Gamma{(r+\frac{3}{2})}}
\end{align*}
 In view of definition of Fox-Wright function \eqref{Fox-Wright}, we get the desired result.
 \end{proof}

\section{Special Cases}
In this section, we obtain the integral representation of Struve function and modified Struve function.\\
\textbf{Case 1.} If we set $c=\mathtt{k}=1$ in \eqref{k-Struve}, then we get Struve function of order $\nu$ as
\begin{equation}\label{Struve}
\mathtt{H}_{\nu}(x):=\sum_{r=0}^{\infty}\frac{(-1)^r}
{\Gamma(r+\nu+\frac{3}{2})\Gamma(r+\frac{3}{2})}
\left(\frac{x}{2}\right)^{2r+\nu+1}.
\end{equation}
\textbf{Case 2.} If we set $c=-1$ and $\mathtt{k}=1$ in \eqref{k-Struve}, then we get modified Struve function,
\begin{eqnarray}\label{modified-Struve}
\mathtt{L}_{\nu}(x):=\sum_{r=0}^{\infty}\frac{1}
{\Gamma(r+\nu+\frac{3}{2})\Gamma(r+\frac{3}{2})}
\left(\frac{x}{2}\right)^{2r+\nu+1}.
\end{eqnarray}

\begin{corollary}
Suppose that the conditions of Theorem \ref{2.1} are satisfied. Then the following integral formula holds: 
 \begin{align*}
\int\limits_{0}^{1}x^{\alpha+\mu-1}(1-x)^{2\alpha-1}(1-\frac{x}{3})^{2(\alpha+\mu)-1}(1-\frac{x}{4})^{\alpha-1}
 \mathtt{H}_{\nu}\left(\frac{y\left(1-\frac{x}{4}\right)\left( 1-x\right)^2}{2}\right)dx
 \end{align*}
 \begin{align}\label{12}
 &=(\frac{y}{2})^{\nu+1}\Gamma(\alpha+\mu)(\frac{2}{3})^{2(\alpha+\mu)}\notag\\
 &\times \quad_2\Psi_{3}
\left[
\begin{array}{ccc}
 (\alpha+\nu+1, 2), (1, 1); \\
 &   & \quad | -\frac{y^2}{4}\\
(\nu+\frac{3}{2},1), (\frac{3}{2}, 1), (2\alpha+\nu+\mu, 2) \\
\end{array}
\right].
 \end{align}
\end{corollary}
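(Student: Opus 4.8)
The plan is to derive this corollary directly from Theorem \ref{2.1} by specializing the parameters to $c=\mathtt{k}=1$. As recorded in Case 1, this choice reduces the generalized $\mathtt{k}$-Struve function $\mathtt{S}_{\nu,c}^{\mathtt{k}}$ to the classical Struve function $\mathtt{H}_{\nu}$ of \eqref{Struve}, so the integrand on the left-hand side of \eqref{9} becomes precisely the integrand appearing in the corollary; the Lavoie--Trottier weight $x^{\alpha+\mu-1}(1-x)^{2\alpha-1}(1-\frac{x}{3})^{2(\alpha+\mu)-1}(1-\frac{x}{4})^{\alpha-1}$ and the argument $\frac{y(1-x/4)(1-x)^{2}}{2}$ do not depend on $c$ or $\mathtt{k}$, so they are unaffected by the specialization.

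First I would set $\mathtt{k}=1$, which collapses every ratio $\frac{\nu}{\mathtt{k}}$ to $\nu$ throughout the right-hand side of \eqref{9}. In the prefactor, the factor $k^{\nu/k}$ in the denominator becomes $1^{\nu}=1$, so that $(\frac{y}{2})^{\nu/\mathtt{k}+1}/k^{\nu/k}$ simplifies to $(\frac{y}{2})^{\nu+1}$, while $\Gamma(\alpha+\mu)(\frac{2}{3})^{2(\alpha+\mu)}$ carries over unchanged; this reproduces the prefactor of \eqref{12}. Next I would set $c=\mathtt{k}=1$ in the series argument $-\frac{cy^{2}}{4\mathtt{k}}$, reducing it to $-\frac{y^{2}}{4}$. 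Finally, replacing $\frac{\nu}{\mathtt{k}}$ by $\nu$ in the six parameter pairs of the ${}_2\Psi_3$ turns the numerator pairs into $(\alpha+\nu+1,2),(1,1)$ and the denominator pairs into $(\nu+\frac{3}{2},1),(\frac{3}{2},1),(2\alpha+\nu+\mu,2)$, which is exactly the Wright function displayed in \eqref{12}.

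Since the corollary explicitly assumes the hypotheses of Theorem \ref{2.1}, no separate verification of convergence or integrability is required: the interchange of summation and integration and the application of \eqref{Lavoi-formula} are already justified in the proof of Theorem \ref{2.1}, and the substitution $\mathtt{k}=1$ merely specializes the admissible region (for instance $\nu>\frac{3}{2}\mathtt{k}$ becomes $\nu>\frac{3}{2}$). I do not expect any genuine difficulty here: the entire argument is a substitution, and the only point demanding attention is the clerical one of confirming that each $\mathtt{k}$-dependent exponent and the $k$-gamma factor reduce correctly, via \eqref{2}, to their classical counterparts.
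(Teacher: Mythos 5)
Your proposal is correct and follows exactly the route the paper intends: the corollary is stated immediately after Case~1 as the specialization $c=\mathtt{k}=1$ of Theorem \ref{2.1}, with no further argument needed beyond checking that the prefactor, the Wright-function parameters, and the argument $-\tfrac{cy^2}{4\mathtt{k}}$ all reduce as you describe. Your substitution checks match the paper's stated result \eqref{12} term for term.
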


  \begin{corollary}
Assume that the conditions of theorem \ref{2.2} satisfied,  then the following integral formula hold true:
 \begin{align*}
\int\limits_{0}^{1}x^{\alpha-1}(1-x)^{2(\alpha+\mu)-1}(1-\frac{x}{3})^{2\alpha-1}(1-\frac{x}{4})^{(\alpha+\mu)-1}
 \mathtt{L}_{\nu}\left(\frac{yx\left(1-\frac{x}{3}\right)^{2}}{2}\right)dx
 \end{align*}
 \begin{align}\label{13}
 &=\frac{(\frac{y}{2})^{\nu+1}\Gamma(\alpha+\mu)(\frac{2}{3})^{2\alpha}}{3^{2\nu+2}}\notag\\
 &\times \quad_2\Psi_{3}
\left[
\begin{array}{ccc}
 (\alpha+\nu+1, 2), (1, 1); \\
 &   & \quad | \frac{y^2}{4}\\
(\nu+\frac{3}{2},1), (\frac{3}{2}, 1), (2\alpha+\nu+\mu, 2) \\
\end{array}
\right].
 \end{align}
 \end{corollary}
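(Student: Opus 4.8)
The plan is to obtain this identity as a direct specialization of Theorem~\ref{2.2}, rather than by rerunning the Lavoie--Trottier computation from scratch. The key observation is the one recorded in Case~2: setting $c=-1$ and $\mathtt{k}=1$ in the generalized $\mathtt{k}$-Struve function \eqref{k-Struve} collapses it exactly to the modified Struve function $\mathtt{L}_{\nu}$ of \eqref{modified-Struve}. Since the weight $x^{\alpha-1}(1-x)^{2(\alpha+\mu)-1}(1-\tfrac{x}{3})^{2\alpha-1}(1-\tfrac{x}{4})^{(\alpha+\mu)-1}$ and the argument $yx(1-\tfrac{x}{3})^{2}/2$ appearing in Theorem~\ref{2.2} are both independent of $c$ and $\mathtt{k}$, the whole left-hand side of \eqref{10} turns into the integral displayed in \eqref{13} the instant these two parameter values are inserted.

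The substantive work is then a purely mechanical simplification of the right-hand side of \eqref{10} under $\mathtt{k}=1$, $c=-1$. First I would replace every occurrence of $\nu/\mathtt{k}$ by $\nu$, so that the Wright parameters $(\alpha+\tfrac{\nu}{\mathtt{k}}+1,2)$, $(\tfrac{\nu}{\mathtt{k}}+\tfrac{3}{2},1)$ and $(2\alpha+\tfrac{\nu}{\mathtt{k}}+\mu,2)$ become $(\alpha+\nu+1,2)$, $(\nu+\tfrac{3}{2},1)$ and $(2\alpha+\nu+\mu,2)$, matching the array in \eqref{13}. Next the factor $\mathtt{k}^{-\nu/\mathtt{k}}$ becomes $1^{-\nu}=1$ and the power $3^{2\nu/\mathtt{k}+2}$ becomes $3^{2\nu+2}$, so the scalar prefactor reduces to $(\tfrac{y}{2})^{\nu+1}\Gamma(\alpha+\mu)(\tfrac{2}{3})^{2\alpha}/3^{2\nu+2}$. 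Finally the Wright argument $-cy^{2}/(4\mathtt{k})$ becomes $-(-1)y^{2}/4=y^{2}/4$. This sign flip relative to \eqref{10} is the only place where the special value $c=-1$ has a visible effect, and it is exactly what distinguishes the modified Struve case here from the ordinary Struve case (with argument $-y^{2}/4$) of the preceding corollary.

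The one point deserving a brief remark is the consistency of the hypotheses. Because we are merely evaluating Theorem~\ref{2.2} at the admissible values $\mathtt{k}=1>0$ and $c=-1\in\mathbb{C}$, the uniform convergence and the interchange of summation and integration are inherited verbatim, as long as the residual conditions $\nu>\tfrac{3}{2}\mathtt{k}$, $\Re(\alpha+\mu)>0$ and $\Re(\alpha+\tfrac{\nu}{\mathtt{k}}+1)>0$ still hold; at $\mathtt{k}=1$ these read $\nu>\tfrac{3}{2}$, $\Re(\alpha+\mu)>0$ and $\Re(\alpha+\nu+1)>0$, which are precisely the assumptions carried over in the statement. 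I therefore do not expect any genuine obstacle: the corollary is a clean parameter specialization, and the entire argument amounts to a single substitution followed by the elementary simplifications described above.
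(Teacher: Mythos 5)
Your proposal is correct and matches the paper's (implicit) treatment: the corollary is obtained simply by specializing Theorem \ref{2.2} at $c=-1$, $\mathtt{k}=1$, whereupon the prefactor reduces to $(\tfrac{y}{2})^{\nu+1}\Gamma(\alpha+\mu)(\tfrac{2}{3})^{2\alpha}/3^{2\nu+2}$ and the Wright argument $-cy^{2}/(4\mathtt{k})$ becomes $y^{2}/4$. Your additional check that the hypotheses of the theorem specialize consistently is a welcome bit of care the paper omits.
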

%


\begin{thebibliography}{00}
\bibitem{Agarwal2014} P. Agarwal , S. Jain, S. Agarwal, M. Nagpal, \textit{On a new class of integrals involving Bessel functions of the first kind}, Communications in Numerical Analysis \textbf{2014} (2014) 1--7.

\bibitem {Choi2013} J. Choi and P. Agarwal, \textit{Certain unified integrals associated with Bessel functions},Boundary Value Problems \textbf{2013} (2013) 95.

\bibitem{Choi2014} J. Choi, P. Agarwal, S. Mathur and S.D. Purohit, \textit{Certain new integral formulas
involving the generalized Bessel functions}, Bull. Korean Math. Soc. \textbf{51} (2014) 995-1003.

\bibitem{Diaz2007}R. D\'{a}az and E. Pariguan, \textit{On hypergeometric functions and Pochhammer k-symbol},Divulg. Mat. \textbf{15 2} (2007)179-192.

\bibitem{Erd1953} A. Erd\'{e}lyi,W. Magnus, F. Oberhettinger, and F.G. Tricomi, \textit{Higher Transcendental Functions},Vol.1, McGraw-Hill, NewYork, Toronto, London, 1953.
\bibitem{Erdelyi1953}A. Erd\'{e}lyi, W. Magnus, F, \textit{Higher Transcendental Functions}, Vol.2, McGraw-Hill, NewYork, Toronto, London, 1953.

\bibitem{CFox} C. Fox, The asymptotic expansion of generalized hypergeometric functions, Proc. London. Math. Soc.,  27(4) (1928), 389--400.


\bibitem{Kilbas-frac} A. A. Kilbas\ and\ N. Sebastian, Fractional
integration of the product of Bessel function of the first kind, Fract.
Calc. Appl. Anal. \textbf{13} (2010), no.~2, 159--175.


\bibitem{Kilbas} A. A. Kilbas, M. Saigo\ and\ J. J. Trujillo, On the generalized Wright function, Fract. Calc. Appl. Anal.  5(4) (2002), 437--460.

\bibitem{Kilbas-itsf} A. A. Kilbas\ and\ N. Sebastian, Generalized fractional integration of Bessel function of the first kind,
Integral Transforms Spec. Funct.  19 (11-12) (2008), 869--883.

\bibitem{KST} A. A. Kilbas, H. M. Srivastava, and J. J. Trujillo, Theory and Applications of Fractional
Differential Equations, North-Holland Mathematics Studies 204, Elsevier, Amsterdam, 2006.

\bibitem {Lavoie1969}J. L. Lavoie and G. Trottier, \textit{On the sum of certain Appell’s series}, Ganita \textbf{20 1}
(1969) 31-32.

\bibitem {Menaria2016}N. Menaria, S. D. Purohit and R. K. Parmar, \textit{On a new class of integrals involving
generalized Mittag-Leffler function}, Surveys in Mathematics and its Applications \textbf{11} (2016) 1-9.

\bibitem {Nisar} K. S. Nisar and S. R. Mondal, \textit{Certain unified integral formulas involving the
generalized modified k-Bessel function of first kind}, arXiv: 1601.06487 [math.CA].

\bibitem{Nisara} K. S. Nisar, P. Agarwal and S. Jain, \textit{Some unified integrals associated with
Bessel-Struve kernel function}, arXiv:1602.01496v1 [math.CA].

\bibitem{Nisar-Saiful} K S Nisar, S R Mondal and J Choi, Certain inequalities involving the $\mathtt{k}$-Struve function, submitted, 2016

\bibitem{Rainville} E. D. Rainville,  Special functions, Macmillan, New York, 1960.

\bibitem{Wright-2} E. M. Wright, The asymptotic expansion of integral functions defined by Taylor series,
Philos. Trans. Roy. Soc. London, Ser. A. 238 (1940), 423--451.

\bibitem{Wright-3} E. M. Wright, The asymptotic expansion of the generalized hypergeometric function, Proc. London Math. Soc. (2) 46(1940), 389--408.


\end{thebibliography}
\end{document}